\documentclass[english,12pt, a4paper,reqno,oneside]{article}

\usepackage{amsfonts,	amsmath,array}
\usepackage{bbm}
\usepackage{babel}
\usepackage{IEEEtrantools}

\usepackage{amssymb,latexsym}
\usepackage{graphicx,amsfonts}
\usepackage{epsfig}
\usepackage{pstricks}
\usepackage{pst-node}
\usepackage{amsthm}

\addtolength{\textwidth}{4cm}
\addtolength{\textheight}{3cm}
\addtolength{\topmargin}{-1.8cm}
\addtolength{\oddsidemargin}{-2cm}
\addtolength{\evensidemargin}{-2cm}
\setlength{\parindent}{0cm}

\newcommand{\bb}{\mathbb}

\newcommand{\R}{\bb R}

\newcommand{\N}{\bb N}

\renewcommand{\epsilon}{\varepsilon}

\newtheorem{claim}{Claim}

\newtheorem{theorem}{Theorem}
\newtheorem{lemma}{Lemma}
\newtheorem{definition}{Definition}

\begin{document}

\begin{title}
{On the existence of compact $\epsilon$-approximated formulations for knapsack in the original space}
\end{title}

\author{Yuri Faenza\thanks{DISOPT, Ecole Polytechnique F\'ed\'erale de Lausanne (Switzerland): \texttt{yuri.faenza@epfl.ch}. Supported by the \emph{Ambizione} grant PZ00P2$\_$154779 \emph{Tight formulations of 0-1 problems} funded by the Swiss National Science Foundation.}  \and Laura Sanit\`a\thanks{Combinatorics and Optimization department, University of Waterloo (Canada): \texttt{lsanita@uwaterloo.ca}}}

\date{}
\maketitle

\abstract{We show that there exists a family $\mathcal P$ of Knapsack polytopes such that for each $P \in \mathcal P$ and each $\epsilon >0$,  any $\epsilon$-approximated formulation of $P$ in the original space $\mathbb R^n$ requires a number of inequalities that is super-polynomial in $n$. This answers a question by Bienstock and McClosky (2012). 
We also prove that, for any down-monotone polytope, an $\epsilon$-approximated formulation in the original space can be obtained with inequalities using at most $O(\frac{1}{\epsilon}\min\{\log(n/\epsilon),n\})$ different coefficients.
}

\section{Introduction}

The \emph{Knapsack} problem is one of the classics of combinatorial optimization: given a set of $n$ objects, each coming with a nonnegative weight and profit, and a threshold $B$, find the most profitable subset of objects whose total weight does not exceed $B$. Knapsack has been extensively studied in the combinatorial optimization and integer programming communities (see e.g.~\cite{KePfPi,NeWo}).

From an algorithmic point of view, we know that the associated decision problem is NP-Complete~\cite{Ka}. On the sunny side, the Knapsack problem admits a \emph{fully polynomial-time approximation scheme} (FPTAS)~\cite{IbKi,La}. That is, for each $\epsilon \in \R_+$ and $n \in \N$, a feasible solution to a Knapsack instance on $n$ objects of cost at least $(1-\epsilon)$ the cost of the optimum can be found in time polynomial in $1/\epsilon$ and $n$. Roughly speaking, this implies that we can find an approximated solution with cost arbitrarily close to the one of the optimum, in time polynomial in the input size and in the accuracy of the approximation. This algorithm relies on classical combinatorial techniques such as scaling, rounding, and dynamic programming.

A natural question \cite{Bien,BieMcc,VVWol} is whether we can translate the \emph{algorithmic} approximation given by the FPTAS into  a \emph{polyhedral} approximation, that is,
whether we can approximate the \emph{Knapsack polytope} using a polynomial number of inequalities, where the Knapsack polytope is the convex hull of all $0-1$ vectors in $\mathbb R^n$ corresponding to feasible solutions of a given Knapsack instance. Let us formalize the concept of polyhedral approximation by introducing some definitions.

\begin{definition}
Given a polytope $P\subseteq \R^n$ and $\epsilon>0$, we say that $Q \subseteq \R^n$ is an \emph{$\varepsilon$-approximated formulation} for $P$ if  $P \subseteq Q$, and for each $c \in \R^n$ the following inequality holds \begin{equation}\label{eq:ratio} \max \{cx : x \in P\} \geq (1-\varepsilon) \max \{cx : x \in Q\}.\end{equation}
\end{definition}

For a polytope $Q\subseteq \R^n$, we denote by  \emph{size}$(Q)$ the number of inequalities in a minimum linear description of $Q$ (i.e. the number of \emph{facets} of $Q$). We also denote by \emph{xc}$(Q)$ its \emph{extension complexity}, that is the minimum size of a polytope $\widetilde Q$ such that there exists a linear map $\pi$ with $Q = \pi (\widetilde Q)$. Note that $Q$ and $\widetilde Q$ can have different dimensions.

A family ${\cal P}$ of
polytopes is said to have a \emph{polynomial-size relaxation schemes} (PSRS) if for each $\varepsilon > 0$ there exists a polynomial function $\phi: \N \rightarrow \N$ such that for each $P \in {\cal P}$ with $P \subseteq \R^n$, $P$ has an $\varepsilon$-approximated formulation $Q$ with \emph{xc}$(Q) \leq \phi(n)$. \\
If, in addition,  the function $\phi$ grows polynomially in $\frac{1}{\varepsilon}$, then ${\cal P}$ is said to have a \emph{fully polynomial-size relaxation schemes} (FPSRS).

The above definitions (see~\cite{BrPo}) of PSRS and  FPSRS can be seen as a translation of the algorithmic notions of FPTAS and PTAS into polyhedral terms.

\smallskip
For  the family of Knapsack polytopes, the existence of a PSRS has been proved by Bienstock~\cite{Bien}, while the existence of a FPSRS is still an interesting open question~\cite{VVWol,Bien}.

The PSRS provided by \cite{Bien} yields an $\varepsilon$-approximated formulation $Q$ with small extension complexity, but it does not imply strong bounds on size$(Q)$. In fact, in the same paper Bienstock~\cite{Bien}, as well as Bienstock and McClosky~\cite[Question 1, page 340]{BieMcc}, ask whether such result can be straightened by proving  the existence of 
a PSRS \emph{in the original space}, that is given by replacing \emph{xc}$(Q)$ with \emph{size}$(Q)$ in the definition of PSRS. In other words, whether for any Knapsack polytope $P \subseteq \mathbb R^n$ we can find  an $\epsilon$-approximated formulation $Q \subseteq \mathbb R^n$ with
a number of \emph{facets} bounded by a polynomial function $\phi(n)$.

In this paper, we negatively answer this open question.
\begin{theorem}
\label{thm:main}
The family of Knapsack polytopes admits no PSRS in the original space.
\end{theorem}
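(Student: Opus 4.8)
The plan is to exhibit an explicit family of Knapsack polytopes for which every $\epsilon$-approximated formulation in the original space must have super-polynomially many facets. The natural strategy is a lower-bound argument on $\size(Q)$: I would first observe that $\size(Q)$ is lower-bounded by the number of distinct facet normals needed to separate $P$ from points just outside an approximating region. Since an $\epsilon$-approximated formulation $Q$ must satisfy $P \subseteq Q$ while the objective inequality \eqref{eq:ratio} forces $Q$ not to stick out too far in any direction $c$, the facets of $Q$ are heavily constrained: along each objective direction $c$, the supporting hyperplane of $Q$ can exceed that of $P$ by at most a $\frac{1}{1-\epsilon}$ factor. The crux is to find a Knapsack instance whose polytope $P$ has super-polynomially many vertices that are ``far apart'' in the sense that no single inequality can be close to tight at many of them simultaneously, so that approximating all of them requires many distinct facets.

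Concretely, I would look for a subset-sum type instance where the weights are chosen (e.g.\ as distinct powers or via a carefully spread arithmetic structure) so that the $0$–$1$ feasible points sit near a sphere or a moment-curve-like configuration. The key lemma to establish is a counting/covering bound: for each candidate facet inequality $a^\top x \le b$ valid for $Q$, the set of vertices of $P$ at which this inequality is within a factor $(1-\epsilon)$ of tight has bounded cardinality (say polynomial, or even bounded by a constant depending only on structure). Combined with the requirement that $Q$ must ``catch'' every vertex direction to within the approximation guarantee, a double-counting argument then yields $\size(Q) \ge (\text{number of vertices})/(\text{per-facet coverage})$, which is super-polynomial if the vertex count is super-polynomial and the per-facet coverage is polynomially bounded.

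I expect the main obstacle to be proving the per-facet coverage bound, i.e.\ showing that no single valid inequality for an $\epsilon$-approximated $Q$ can be simultaneously near-tight at too many vertices of $P$. This is where the geometry of the chosen weights must be exploited: one wants the feasible $0$–$1$ points to be in ``general position'' or to lie on a curved surface so that any hyperplane touches only few of them within the $\epsilon$-tolerance. A clean way to formalize this is to pick objective vectors $c$ pointing toward individual vertices (or small clusters of vertices) and argue that the approximation inequality \eqref{eq:ratio}, applied to each such $c$, forces a distinct supporting facet of $Q$. The delicate point is that the $\epsilon$-slack means facets need only be approximately supporting, so the separation between the required facets must be shown to survive the $(1-\epsilon)$ relaxation; this likely requires the vertices to be spread on a scale comparable to $\epsilon$ times the objective value, which constrains how the weights and threshold $B$ are chosen. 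Once this separation is quantified, the lower bound on the number of facets follows, and since the bound must hold for every fixed $\epsilon>0$ with a vertex count growing super-polynomially in $n$, the theorem is established.
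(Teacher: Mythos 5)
Your proposal has the right overall shape---exhibit super-polynomially many objective directions, each of which forces its own inequality---but it is missing the two ideas that make the argument actually work, and the version you describe contains a genuine logical gap. The gap: you propose to lower-bound $\size(Q)$ by counting \emph{vertices of $P$} and bounding how many vertices a single inequality of $Q$ can be near-tight at. But the vertices of $P$ lie inside $Q$ and are never cut off by any facet of $Q$, so a double-counting argument over them does not bound $\size(Q)$ from below: the unit cube (a Knapsack polytope with a large budget) has $2^n$ vertices lying on a common sphere, yet $2n$ facets describe it exactly, and for $0$--$1$ down-monotone polytopes essentially every useful valid inequality (e.g.\ $\sum_{i\in T} x_i\leq k$) is \emph{exactly} tight at exponentially many vertices, so the ``per-facet coverage'' lemma you hope for cannot hold in the form you need. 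What is actually required is a super-polynomial family of witness points \emph{outside} $P$, each violating \eqref{eq:ratio} for some $c\geq 0$ and hence necessarily cut off by some inequality of any $\epsilon$-approximated $Q$, together with a reason why no single inequality can cut off two of them. The paper supplies that reason with a convexity trick absent from your plan: the witnesses are fractional points chosen so that the midpoint of any two of them lies in $P$; since every inequality of $Q$ is valid for $P\subseteq Q$, an inequality cutting off two witnesses would also cut off their midpoint, a contradiction. This one-line argument replaces the delicate ``separation survives the $(1-\epsilon)$ relaxation'' analysis you correctly identify as the obstacle.

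The second missing ingredient is the construction itself, which goes in a different direction from your ``general position / moment curve'' instinct. The paper uses an almost trivial instance ($n$ items of equal weight $\frac{1}{2\epsilon\sqrt n}$ plus one item of weight $n$, with budget $n+\frac{1}{2\epsilon}-1$) that even admits a compact \emph{extended} formulation; the hardness is injected combinatorially via a Nisan--Wigderson-style design: $n^{\Theta(\sqrt n)}$ subsets of $[n]$, each of size $\sqrt n$, with pairwise intersections at most $\sqrt n/2-4$. Each set $S$ yields a witness with value $1-\frac{\epsilon}{2}$ on the coordinates in $S$ and $\frac{4}{5}$ on the heavy coordinate; property (ii) is checked with the objective supported on $S\cup\{n+1\}$, and the small pairwise intersections are exactly what makes the pairwise midpoints dominated by convex combinations of feasible knapsack solutions, hence in $P$. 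Without the outside witness points, the set system, and the midpoint argument, the proposal does not close.
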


Bienstock and McClosky~\cite{BieMcc} also ask for upper bounds on the number of inequalities needed to obtain an $\epsilon$-approximation in the original space for the Knapsack polytope. To the best of our knowledge, the only known upper bound is the one of Van Vyve~\cite{VV}, cited in~\cite{BieMcc}, of order $(\lceil n/\epsilon\rceil+1)^n$. This follows from the fact that the formulation containing all valid inequalities with integer coefficients between $0$ and $\lceil n/\epsilon\rceil$ is $\epsilon$-approximated. In~\cite{BieMcc}, it is also shown that, for each $\gamma>0$, there exist Knapsack polytopes $P$ such that the polytope $Q$ given by all valid inequalities with integer coefficients up to $n^{1-\gamma}$ is a bad approximation. Namely, there exists an objective function $c \in \R_+$ such that the ratio \eqref{eq:ratio} is arbitrarily close to $1/2$.

Our second result is a new upper bound on the number of different coefficients (hence on the number of facets) needed to obtain an $\epsilon$-approximated formulation for Knapsack polytopes in the original space. Our upper bound improves over the bound in \cite{VV} whenever $1/\epsilon$ is a sub-exponential function of $n$, and asymptotically matches it otherwise. Moreover, our result holds for any \emph{down-monotone} polytope, that is, any polytope $P\subseteq \R^n_+$ with the property that $x \in P$ and $0 \leq y \leq x$, implies $y \in P$.

\begin{theorem}\label{thm:upper_bound}
For any down-monotone polytope $P \subseteq \R^n_+$, an $\epsilon$-approximated formulation in the original space can be obtained with inequalities using at most $O(\frac{1}{\epsilon}\min\{\log (n/\epsilon),n\})$ different coefficients. Each of those coefficients is an integer between $0$ and $5n(1-\epsilon)/\epsilon$. In particular, there exists an $\epsilon$-approximated formulation with $O(\frac{1}{\epsilon}\min\{\log (n/\epsilon),n\})^n$ facets.
\end{theorem}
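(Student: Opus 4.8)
The plan is to let $Q$ be the polyhedron defined by \emph{all} inequalities that are valid for $P$ and whose coefficient vector lies in $S^n$, where $S\subseteq\{0,1,\dots,\ceil{5n(1-\epsilon)/\epsilon}\}$ is a fixed set of admissible integer coefficients chosen below. With this definition $P\subseteq Q$ is immediate, the number of distinct coefficients is exactly $|S|$, and since for a fixed $a\in S^n$ only the tightest right-hand side $\max\{ax:x\in P\}$ matters, $Q$ has at most $|S|^n$ facets. Two structural facts will be used repeatedly: $Q$ is itself down-monotone because $S\subseteq\R_+$; and for every coordinate $i$ the inequality $x_i\le\max\{x_i:x\in P\}$ has coefficient vector $e_i\in S^n$ (as soon as $1\in S$), so $Q$ inherits the coordinate upper bounds of $P$. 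After deleting coordinates with $\max\{x_i:x\in P\}=0$ (for Knapsack, items heavier than the budget) I may normalize so that $\max\{x_i:x\in P\}=1$ for all $i$; that is, $P\subseteq[0,1]^n$ and each unit vector $e_i\in P$. This normalization is exactly what ties the admissible integer coefficients to the geometry of $P$.

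For the choice of $S$ I would use a geometric net: fix $\delta=\Theta(\epsilon)$ and let $S$ contain $0$, the small integers, and the integer roundings of the powers $(1+\delta)^k$ up to $O(n/\epsilon)$. A multiplicative $(1+\delta)$-net of an interval of length $O(n/\epsilon)$ has $O(\tfrac1\delta\log(n/\epsilon))=O(\tfrac1\epsilon\log(n/\epsilon))$ points, and every element is an integer bounded by $5n(1-\epsilon)/\epsilon$. When $1/\epsilon$ is so large that $\log(n/\epsilon)>n$, I would instead take $S$ to be \emph{all} integers in $[0,5n(1-\epsilon)/\epsilon]$, which has $O(n/\epsilon)=O(\tfrac1\epsilon\,n)$ elements. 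Taking the better of the two choices yields the claimed $O(\tfrac1\epsilon\min\{\log(n/\epsilon),n\})$ distinct coefficients.

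The core of the argument is the approximation guarantee, proved objective by objective. Fix $c$; since $Q$ is down-monotone I may assume $c\ge 0$, and by scale invariance that $c_{\max}=\max_i c_i>0$. I would call coordinate $i$ \emph{negligible} if $c_i$ is below a threshold of order $\tfrac{\epsilon}{n}c_{\max}$, and build $\tilde c\in S^n$ by setting $\tilde c_i=0$ there and, on the remaining coordinates, rounding an appropriately scaled $\lambda c_i$ up to the nearest element of $S$, so that $\lambda c_i\le\tilde c_i\le(1+\delta)\lambda c_i$. Then $\tilde c\,x\le b^*:=\max\{\tilde c\,y:y\in P\}$ is one of the inequalities defining $Q$. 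Letting $x^*$ attain $\max\{cx:x\in Q\}$ and $y^*$ attain $b^*$, the two-sided rounding estimates chain into
\begin{equation*}
\max\{cx:x\in P\}\ \ge\ c\,y^*\ \ge\ \tfrac{1}{(1+\delta)\lambda}\,\tilde c\,y^*\ \ge\ \tfrac{1}{(1+\delta)\lambda}\,\tilde c\,x^*\ \ge\ \tfrac{1}{1+\delta}\Big(\max\{cx:x\in Q\}-E\Big),
\end{equation*}
where $E=\sum_{i\ \mathrm{negligible}}c_i x_i^*$ is the objective mass that $x^*$ places on the discarded coordinates. With $\delta=\Theta(\epsilon)$ the theorem follows once $E\le O(\epsilon)\max\{cx:x\in Q\}$.

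Bounding $E$, and balancing it against the ceiling on the coefficients, is the step I expect to be the main obstacle, and it is precisely where down-monotonicity and the unit upper bounds enter. Because $x^*\in Q\subseteq[0,1]^n$ we have $x_i^*\le 1$, so $E$ is at most the negligibility threshold times $n$, i.e. $O(\epsilon)\,c_{\max}$; and because $e_{j}\in P\subseteq Q$ for the index $j$ realizing $c_{\max}$, singleton feasibility forces $\max\{cx:x\in Q\}\ge c_{\max}$, which closes the estimate. The genuine technical tension is that pushing the negligibility threshold down (to keep $E$ small) forces the scaled non-negligible entries into a wider range, while the requirement that the rounded $\tilde c_i$ be \emph{integers bounded by $O(n/\epsilon)$} limits both the resolution and the span of the grid; reconciling these two demands is what dictates the geometric-versus-all-integers dichotomy and is the reason the bound takes the form $\min\{\log(n/\epsilon),n\}$. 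The remaining verifications—that the integer-rounded geometric net still realizes a factor $(1+\delta)$ away from the bottom of its range, and that the constant $5$ absorbs all the rounding slack—are routine once the threshold and scaling are fixed.
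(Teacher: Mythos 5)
Your overall architecture is exactly the paper's: the same $Q$ (all valid inequalities with coefficients in $S^n$), the same integer-rounded geometric net capped at $5n(1-\epsilon)/\epsilon$, the same thresholding of coefficients below $\Theta(\epsilon/n)\,\|c\|_\infty$, and the same use of down-monotonicity plus the normalization $\max\{x_i: x\in P\}=1$ to get $\max\{cx: x\in P\}\ge \|c\|_\infty$. The gap is in your displayed chain, specifically the step $c\,y^*\ \ge\ \frac{1}{(1+\delta)\lambda}\,\tilde c\,y^*$, which needs $\tilde c_i\le(1+\delta)\lambda c_i$ for every non-negligible $i$. An integer-rounded geometric net does \emph{not} have multiplicative resolution $1+\delta$ near the bottom of its range: consecutive distinct elements of $\{\lfloor(1+\delta)^k\rfloor\}$ below $1/\delta$ differ by $1$, i.e.\ by a ratio as large as $2$, and your own parameters force the smallest non-negligible scaled coefficients to sit exactly there --- the threshold $\Theta(\epsilon/n)\|c\|_\infty$ combined with the cap $\lambda\|c\|_\infty\le 5n(1-\epsilon)/\epsilon$ places them at about $5$, where the net's gaps have ratio $\approx 6/5 \gg 1+\delta$. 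Raising the bottom of the range to $1/\delta$ to restore multiplicative accuracy would push the top to $\Theta(n/\epsilon^2)$, violating the stated coefficient bound. So the tension you flag at the end is real, and it cannot be discharged as ``routine'' within a purely multiplicative error accounting.

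The paper resolves it with a mixed accounting: the integer floor costs an \emph{additive} $1$ per coordinate (there, $\tilde c_i\ge \frac{2}{2+\gamma}\frac{c_i}{K}-1$), which summed against $x^*\le 1$ gives a total additive error of at most $n$; this $n$ is then absorbed because the scaled optimum satisfies $\tilde\beta=\beta/K\ge\|c\|_\infty/K=5n/\gamma\gg n$ --- this is precisely the role of the constant $5$ and of the inequality $\beta\ge\|c\|_\infty$. Your proof closes if you replace the multiplicative upper bound by $\tilde c_i\le(1+\delta)\lambda c_i+1$ (or round down and put the $\pm 1$ on the lower bound, as the paper does), carry the resulting $\pm n$ term through the chain, and compare it to $\lambda\beta\ge\lambda\|c\|_\infty=\Theta(n/\epsilon)$. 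Two minor points: you should intersect $Q$ with $x\ge 0$ explicitly (the paper includes $0\le x\le 1$ in the definition), since the valid inequalities with coefficients in $S^n\subseteq\R^n_+$ alone leave $Q$ unbounded below, hence neither a polytope nor down-monotone in the paper's sense; and the reduction to $c\ge 0$ should be justified via down-monotonicity of $P$ (to define $\beta$) together with that of $Q$.
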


We emphasize that Theorem \ref{thm:upper_bound} implies the existence of $\epsilon$-approximated formulations of a down-monotone polytope with only $O(\log n)$ different coefficients, if $\epsilon$ is \emph{fixed}.

\section{No PSRS in the original space exists for Knapsack}
The goal of this section is to prove Theorem \ref{thm:main}.

\paragraph{The strategy.} For any given $\varepsilon>0$ small enough, we provide an infinite sequence of integers, and to each integer $n$ from this sequence we associate a Knapsack polytope $P(n,\varepsilon)\subseteq \R^n$ and a family of points ${\cal X}(n,\varepsilon)\subseteq \R^n$ with three properties:
\begin{itemize}
\item[(i)] $|{\cal X}(n,\varepsilon)|$ is super-polynomial in $n$;
\item[(ii)] for each $\bar x \in {\cal X}(n,\varepsilon)$, there exists $c \in \R_+^n$ such that $(1-\varepsilon)c\bar x> \max\{cx : x \in P(n,\varepsilon)\}$;
\item[(iii)] for each pair of distinct points $\bar x_1, \bar x_2 \in {\cal X}(n,\varepsilon)$, the line segment between $\bar x_1$ and $\bar x_2$ intersects $P(n,\varepsilon)$.
\end{itemize}

In particular, (ii) implies that $P(n,\varepsilon)\cap {\cal X}(n,\varepsilon)= \emptyset$ and, most important, that any $\varepsilon$-approximation of $P(n,\varepsilon)$ must have an inequality separating each $\bar x \in {\cal X}(n,\varepsilon)$ from $P(n,\varepsilon)$. However, (iii) implies that no two points $\bar x_1, \bar x_2 \in {\cal X}(n,\varepsilon)$ can be separated by \emph{the same} inequality, since the line segment between them intersects $P(n,\varepsilon)$.
It follows that any description of an  $\varepsilon$-approximation of $P(n,\varepsilon)$  contains at least $|{\cal X}(n,\varepsilon)|$ distinct inequalities, that is super-polynomial in $n$ by (i). As $\epsilon$ was arbitrary and the choices of $n$ were infinite, this implies Theorem \ref{thm:main}. We remark that similar strategies have been recently used by Kaibel and Weltge~\cite{KaWe} and Kolliopoulos and Moysoglou~\cite{KoMo}.

\paragraph{Constructing a family of Knapsack polytopes.}
Let $\varepsilon >0$ be a fixed small number $< \frac{2}{27}$. For any  $n > 1/\varepsilon$, $n\geq 130$ that is the square of a prime number, we construct a Knapsack polytope as follows:

$$P(n+1,\varepsilon)=\{x \in \{0,1\}^{n+1}:  \sum_{i=1}^{n} \frac{1}{2\varepsilon \sqrt n} x_i  + n x_{n+1} \leq n + \frac{1}{2\varepsilon}-1 \},$$

Interestingly, note that $P(n+1,\varepsilon)$ describes a somewhat ``easy" Knapsack instance. In fact, the set of feasible solutions can be partitioned
into two subsets: those with $x_{n+1}=1$ and those with $x_{n+1} =0$. If we fix the value of $x_{n+1}$, the residual problem in both cases reduces to a Knapsack instance where all the $n$ remaining objects have the same size. Therefore a greedy algorithm can be used to find an optimal solution to each subproblem. Similarly, from a polyhedral point of view, one easily realizes that the convex hull of the points in $P(n+1,\varepsilon)$ admits a compact extended formulation. This can be obtained as follows: partition again the family of feasible solutions into two sets based on the value of $x_{n+1}$. As one immediately checks, the convex hull of each of those two subproblems has a compact formulation. 
Therefore one can apply Balas' union of polyhedra technique~\cite{Bal} to obtain the claimed compact extended formulation.
However, the situation drastically change when we require our formulation to yield a polytope in $\mathbb R^{n+1}$. In this case, we can not even $\varepsilon$-\emph{approximate} our polytope without using a super-polynomial number of inequalities,
as we are now going to show.

\paragraph{Constructing a family of points.}  Our goal is to define a family of points ${\cal X}(n+1,\varepsilon)$ that together with $P(n+1,\varepsilon)$ satisfy properties (i)-(iii).
The main idea lies in realizing that this can be achieved by constructing a suitable \emph{set system} $\mathcal S$ of subsets of $[n]$, containing a super-polynomial number of sets with the property that the sets have pairwise small intersection.

A way to construct such a set system is following an idea of Nisan and Wigderson~\cite[Lemma 2.5]{NiWi}. Consider the field $\bb F$ with $\sqrt{n}$ elements, which exists because we are assuming $\sqrt n$ to be a prime number. Consider the universe set $U$ made of $n$ elements represented as ordered pairs $(a,b)$ with $a,b \in {\bb F}$. To each polynomial $\pi$ of $\bb F$ of degree at most $ \lfloor \sqrt{n}/2 - 4\rfloor $, we associate the set $S_{\pi}=\{(a,\pi(a))\}_{a \in {\bb F}}$. Note that $|S_{\pi}|=\sqrt{n}$. Let ${\cal S}$ be the family of all such sets. It is well-known (see e.g.~\cite{Intro}) that, for each choice of $2(k+1)$ elements $a_1,\dots,a_{k+1},b_1,\dots,b_{k+1} \in \bb F$, with the first $k+1$ being distinct, there exists exactly one polynomial $\pi$ in $\bb F$ of degree at most $k$ such that $\pi(a_i)=b_i$ for each $i$. We then deduce $|{\cal S}|=n^{\frac{1}{2}(\lfloor \sqrt{n}/2 - 4\rfloor+1)}$ and $|S_{\pi} \cap S_{\pi'}| \leq \lfloor \sqrt{n}/2 - 4\rfloor$ for each pair of polynomials $\pi \neq \pi'$ of $\bb F$. Hence, we obtained a family ${\cal S}\subseteq 2^n$ such that:
\begin{itemize}
\item[(a)] $|S|=\sqrt n$ for each $S \in {\cal S}$,
\item[(b)] $ |S \cap  S'| \leq \lfloor \sqrt{n}/2 - 4\rfloor$ for each $S\neq S' \in {\cal S}$,
\item[(c)] $|\mathcal S| =n^{\frac{1}{2}(\lfloor \sqrt{n}/2 - 4\rfloor+1)}$.
\end{itemize}

\smallskip

From know on, we denote the elements of $U$ with numbers $1$ to $n$, and identify the $i-$th element with the $i$-th object from the knapsack instance, for $i=1,\dots,n$. For each $S \in {\cal S}$ we define a point $x^S \in \mathbb R^{n+1}$ with coordinates:
\begin{itemize}
\item[-] $x^S_{n+1}=\frac{4}{5}$,
\item[-] $x^S_i = 1-\frac{\varepsilon}{2}$ for $i \in S$,
\item[-] $x^S_i = 0$ for $i \notin S$ and $i \neq n+1$.
\end{itemize}

We let ${\cal X}(n +1 ,\varepsilon)$ be the family of these points, that is ${\cal X}(n +1,\varepsilon):=\{x^S : S \in {\cal S}\}$.

\paragraph{Verifying properties.} We now show that ${\cal X}(n+1,\varepsilon)$ and $P(n+1,\varepsilon)$ satisfy properties (i), (ii) and (iii), concluding our proof.
\begin{claim}\label{cl:2}
 ${\cal X}(n + 1,\varepsilon)$ satisfies \emph{(i)}.
\end{claim}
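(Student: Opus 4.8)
The plan is to reduce Claim \ref{cl:2} to the count already established in property (c), via a short injectivity argument followed by an elementary asymptotic estimate. First I would observe that the map $S \mapsto x^S$ is injective, so that $|\mathcal X(n+1,\varepsilon)| = |\mathcal S|$. This is immediate from the definition of the points: for $i \in \{1,\dots,n\}$ we have $x^S_i = 1-\varepsilon/2 > 0$ exactly when $i \in S$ and $x^S_i = 0$ otherwise, so the support of $x^S$ among the first $n$ coordinates recovers $S$ (the coordinate $x^S_{n+1}=\frac{4}{5}$ is constant and carries no information). Hence distinct sets in $\mathcal S$ produce distinct points, and the two families have the same cardinality.

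Next I would invoke (c) to write $|\mathcal X(n+1,\varepsilon)| = |\mathcal S| = n^{g(n)}$ with exponent $g(n) := \frac{1}{2}(\lfloor \sqrt n/2 - 4\rfloor + 1)$, and show that $n^{g(n)}$ is super-polynomial in $n$. The only thing to check is that $g(n) \to \infty$ along the chosen sequence. Using $\lfloor y \rfloor + 1 > y$ with $y = \sqrt n/2 - 4$ gives the clean lower bound $g(n) > \frac{\sqrt n}{4} - 2$, which diverges as $n \to \infty$. Consequently, for any fixed constant $k$ there is an $n_0$ with $g(n) > k$ for all $n \geq n_0$; for such $n$ we have $n^{g(n)} = n^{g(n)-k}\, n^k$ with $n^{g(n)-k} \to \infty$, so $n^{g(n)}$ eventually exceeds every polynomial $n^k$. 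This is precisely the assertion that $|\mathcal X(n+1,\varepsilon)|$ is super-polynomial in $n$, as required by (i).

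I do not expect a genuine obstacle here: both the injectivity and the growth estimate are routine. The single point deserving a word of care is that the floor in the exponent does not collapse the bound to something bounded; the inequality $g(n) > \frac{\sqrt n}{4} - 2$ settles this for all admissible $n$ (recall that $n$ is the square of a prime with $n \geq 130$), keeping the exponent positive and unbounded.
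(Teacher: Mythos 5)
Your proposal is correct and follows the same route as the paper, which simply notes $|\mathcal X(n+1,\varepsilon)| = |\mathcal S|$ and invokes (c); you merely spell out the injectivity of $S \mapsto x^S$ and the super-polynomial growth of $n^{\frac{1}{2}(\lfloor \sqrt n/2 - 4\rfloor + 1)}$, both of which the paper leaves implicit. No issues.
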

\begin{proof}
$| {\cal X}(n + 1,\varepsilon)| = |\mathcal S|$ and, by (c), $|\mathcal S|= n^{\frac{1}{2}(\lfloor \sqrt{n}/2 - 4\rfloor+1)}$.
\end{proof}

\begin{claim}\label{cl:2}
${\cal X}(n+1,\varepsilon)$ and $P(n+1,\varepsilon)$ satisfy \emph{(ii)}.
\end{claim}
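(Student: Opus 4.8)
The plan is to exhibit, for each set $S \in \mathcal{S}$ and its associated point $x^S$, an explicit nonnegative objective $c \in \R^{n+1}_+$ witnessing (ii). The natural candidate rewards exactly the coordinates of $S$: set $c_i = 1$ for $i \in S$, set $c_i = 0$ for $i \notin S \cup \{n+1\}$, and leave a single free parameter $c_{n+1} = \gamma \geq 0$ on the last coordinate, to be tuned at the end. The role of $\gamma$ is delicate: since $x^S_{n+1} = \frac45$, a larger $\gamma$ boosts $cx^S$, but it simultaneously helps the knapsack solutions that buy item $n+1$, so the whole claim will hinge on finding a value of $\gamma$ that beats both knapsack regimes at once.

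First I would compute $\max\{cx : x \in P(n+1,\varepsilon)\}$. Since $c$ is linear, this maximum is attained at a feasible $0$-$1$ point, so I can split the analysis according to whether $x_{n+1}=0$ or $x_{n+1}=1$. If $x_{n+1}=0$, the knapsack constraint is slack enough to switch on all items of $S$ (the budget they consume is $\frac{1}{2\varepsilon} \leq n + \frac{1}{2\varepsilon} - 1$), giving objective value $|S| = \sqrt n$; nothing outside $S$ helps, so the optimum in this regime is exactly $\sqrt n$. If $x_{n+1}=1$, the residual budget forces $\sum_{i=1}^n x_i \leq \sqrt n(1-2\varepsilon)$, so the best one can do is $\lfloor \sqrt n(1-2\varepsilon)\rfloor + \gamma$. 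Hence $\max\{cx : x \in P\} = \max\{\sqrt n,\ \lfloor\sqrt n(1-2\varepsilon)\rfloor + \gamma\}$.

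Next I would plug in the choice $\gamma = 2\varepsilon\sqrt n$. A direct computation gives $cx^S = \sqrt n(1-\tfrac\varepsilon2) + \tfrac45\gamma = \sqrt n\bigl(1 + \tfrac{11}{10}\varepsilon\bigr)$, while the second knapsack regime contributes $\lfloor\sqrt n(1-2\varepsilon)\rfloor + 2\varepsilon\sqrt n \leq \sqrt n$, so the maximum over $P$ collapses to $\sqrt n$. It then remains to check $(1-\varepsilon)\,cx^S = \sqrt n\,(1-\varepsilon)\bigl(1+\tfrac{11}{10}\varepsilon\bigr) = \sqrt n\bigl(1 + \tfrac{\varepsilon}{10}(1-11\varepsilon)\bigr) > \sqrt n$, which holds precisely because $1 - 11\varepsilon > 0$. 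This last inequality is the crux, and it is exactly where the hypothesis $\varepsilon < \frac{2}{27} < \frac{1}{11}$ enters.

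The main obstacle I anticipate is not any single computation but the tuning of $\gamma$: one must verify that the interval of admissible values---large enough to overtake the slack-constraint optimum $\sqrt n$ after the $(1-\varepsilon)$ loss, yet small enough that the $x_{n+1}=1$ solutions cannot catch up---is nonempty. Working out the two resulting linear constraints on $\gamma$ shows that this window is nonempty exactly when $\varepsilon < \frac{1}{11}$, which explains the numerical threshold in the construction; the convenient value $\gamma = 2\varepsilon\sqrt n$ lies in this window and makes both verifications short.
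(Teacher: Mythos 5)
Your proof is correct and is essentially the paper's argument: your objective $c$ (with $c_i=1$ on $S$ and $c_{n+1}=2\varepsilon\sqrt n$) is exactly the paper's objective (with $c_i=\frac{1}{2\varepsilon\sqrt n}$ on $S$ and $c_{n+1}=1$) rescaled by $2\varepsilon\sqrt n$, and the case split on $x_{n+1}\in\{0,1\}$, the value $\max\{cx: x\in P\}=\sqrt n$ (resp.\ $\frac{1}{2\varepsilon}$), the ratio $1+\frac{11}{10}\varepsilon$, and the final use of $\varepsilon<\frac{1}{11}$ all match the paper's computation.
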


\begin{proof}
Fix $S \in {\cal S}$, and define $c \in \R^n$  as follows:

\begin{itemize}
\item[-] $c_{n+1}=1$,
\item[-] $c_i = \frac{1}{2 \varepsilon \sqrt n}$ for $i \in S$,
\item[-] $c_i = 0$ for $i \notin S$ and $i \neq n+1$.
\end{itemize}

We first show that
$$\max \{cx : x \in P(n+1,\varepsilon)\}= \frac{1}{2\varepsilon}.$$

Let $x^*$ be the optimal solution to the problem $\max \{cx : x \in P(n+1,\varepsilon)\}$. Without loss of generality, we can assume that the non-zero
entries of $x^*$ have indices in the set $S \cup \{n+1 \}$, since all other indices have objective function coefficient equal to zero.

Let's first assume that $x^*_{n+1} =1$.
Then  $$\sum_{i \in S} c_i x^*_i = \sum_{i \in S} \frac{1}{2 \varepsilon \sqrt n} x^*_i \leq \frac{1}{2\varepsilon} -1,$$ where last inequality
is implied by feasibility of $x^*$. It follows that $cx^* \leq \frac{1}{2\varepsilon}$.

If instead $x^*_{n+1} = 0$, 
we get $\sum_{i \in S} c_i x^*_i \leq |S| \frac{1}{2\varepsilon \sqrt n} = \frac{1}{2\varepsilon}$. On the other hand, $$cx^S=(1-\frac{\varepsilon}{2})(\frac{1}{2\varepsilon}) + \frac{4}{5} = \frac{1}{2\varepsilon} - \frac{1}{4} +\frac{4}{5}=\frac{1}{2\varepsilon}+ \frac{11}{20}, \hbox{ hence}$$

$$\label{eq:1}(1-\varepsilon)\frac{cx^S}{ \max \{cx : x \in P(n+1,\varepsilon)\}}\geq (1-\varepsilon)(1 + \frac{11}{10}\varepsilon)=1 + \varepsilon(\frac{1}{10}-\frac{11}{10}\varepsilon) >1$$

where last inequality follows because $\frac{1}{10}-\frac{11}{10}\varepsilon >0$, since $\varepsilon < \frac{1}{11}$ .
\end{proof}

\begin{claim}
For each $S\neq S' \in {\cal S}$,  we have $\frac{1}{2} x^S + \frac{1}{2}x^{S'}  \in P(n+1,\varepsilon)$. Hence, ${\cal X}(n+1,\varepsilon)$ and $P(n+1,\varepsilon)$ satisfy \emph{(iii)}.
\end{claim}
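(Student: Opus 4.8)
The plan is to exhibit the midpoint $m := \frac12 x^S + \frac12 x^{S'}$ explicitly as a convex combination of feasible $0$--$1$ points of $P(n+1,\varepsilon)$; since $m$ lies on the segment between $x^S$ and $x^{S'}$, this proves both the displayed membership and property (iii). Writing $a := |S \cap S'|$ and recalling $|S| = |S'| = \sqrt n$, a direct computation gives $m_{n+1} = \frac45$, $m_i = 1 - \frac\varepsilon2$ for $i \in S \cap S'$, $m_i = \frac12 - \frac\varepsilon4$ for $i$ in the symmetric difference of $S$ and $S'$, and $m_i = 0$ otherwise. One first notes that LP-feasibility of $m$ is immediate but not sufficient: the linear relaxation of $P(n+1,\varepsilon)$ is not integral (it has fractional vertices supported on a level set of the knapsack constraint), so the membership must be certified by an actual convex decomposition.

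The decomposition I would use puts weight $\frac15$ on the single point $y^0$ with $y^0_{n+1}=0$ and $y^0_i = 1$ for all $i \in S \cup S'$, and weight $\frac45$ on points having $x_{n+1}=1$. The point $y^0$ is feasible because $|S\cup S'|\le 2\sqrt n$ forces its knapsack value below $\frac1\varepsilon \le n + \frac1{2\varepsilon}-1$ (using $n > 1/\varepsilon$). For the $x_{n+1}=1$ part it suffices to realize, as a convex combination of feasible $0$--$1$ points, the fractional point $z^\ast \in [0,1]^{S\cup S'}$ with $z^\ast_i = \alpha := 1-\frac{5\varepsilon}{8}$ on $S\cap S'$ and $z^\ast_i = \beta := \frac38 - \frac{5\varepsilon}{16}$ on the symmetric difference: indeed $\frac45 z^\ast_i + \frac15 \cdot 1$ reproduces exactly the values $m_i$ on $S\cup S'$ (a one-line check on each of the two blocks), while on $\{n+1\}$ we get $\frac45$ and elsewhere $0$.

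The key reduction is that a $0$--$1$ point with $x_{n+1}=1$ and support $T\subseteq S\cup S'$ is feasible for $P(n+1,\varepsilon)$ exactly when $|T|\le k_{\max} := \floor{\sqrt n(1-2\varepsilon)}$, since the residual knapsack budget when $x_{n+1}=1$ equals $\frac1{2\varepsilon}-1$. Hence it is enough to check that $z^\ast$ lies in the polytope $\{z\in[0,1]^{S\cup S'}\st \sum_i z_i \le k_{\max}\}$, which is integral for integer $k_{\max}$ (a basic solution has all but one coordinate in $\{0,1\}$, and the single knapsack equality with unit coefficients then forces the last coordinate to be integral as well); its vertices are precisely the feasible $x_{n+1}=1$ points, so $z^\ast$ is automatically a convex combination of them.

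The crux --- and the only place where the two hypotheses are used --- is verifying $\sum_i z^\ast_i \le k_{\max}$. Here $\sum_i z^\ast_i = a\,\alpha + (2\sqrt n - 2a)\beta = \frac a4 + \big(\frac34 - \frac{5\varepsilon}{8}\big)\sqrt n$, where the $\varepsilon$-terms in $\alpha - 2\beta$ cancel to leave $\frac14$. Bounding $a \le \floor{\sqrt n/2 - 4} \le \sqrt n/2 - 4$ via the small-intersection property (b) gives $\sqrt n(1-2\varepsilon) - \sum_i z^\ast_i \ge \frac{\sqrt n}{8}(1 - 11\varepsilon) + 1$, which is strictly positive since $\varepsilon < \frac2{27} < \frac1{11}$. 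Thus $\sum_i z^\ast_i < k_{\max}$ and the decomposition goes through. I expect this last inequality to be the main obstacle: it is exactly the point at which the pairwise near-disjointness of the Nisan--Wigderson set system is converted into room inside the knapsack budget, and it is what dictates the precise numerical constants (the value $\frac45$, the degree bound $\floor{\sqrt n/2 - 4}$, and the threshold $\frac2{27}$) chosen in the construction.
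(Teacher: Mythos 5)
Your proof is correct, and it shares the paper's skeleton --- split the point into $\frac45$ of an $x_{n+1}=1$ part plus $\frac15$ of the incidence vector of $S\cup S'$, then use $|S\cap S'|\le \sqrt n/2-4$ to fit the first part inside the residual budget $\lfloor(1-2\varepsilon)\sqrt n\rfloor$ --- but the mechanism certifying the $x_{n+1}=1$ part is genuinely different. The paper does not decompose the midpoint exactly: it builds a point $\bar x$ that \emph{dominates} it coordinate-wise, by averaging uniformly over the family of all feasible sets containing $\{n+1\}\cup(S\cap S')$ together with a fixed number of elements of $S\triangle S'$, and then invokes down-monotonicity of the knapsack polytope; this costs an explicit computation of the common value of $\bar x_i$ on $S\triangle S'$ and a heavier chain of inequalities, which is where the threshold $\varepsilon<\frac{2}{27}$ is actually consumed (via $(\frac14-\frac{11}{8}\varepsilon)/(\frac14+\frac58\varepsilon)\ge\frac12$). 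You instead realize the midpoint exactly and outsource the existence of the convex decomposition to the integrality of $\{z\in[0,1]^{S\cup S'}\,:\,\sum_i z_i\le k_{\max}\}$, which makes down-monotonicity unnecessary, reduces the crux to the single inequality $\frac a4+(\frac34-\frac{5\varepsilon}{8})\sqrt n\le\sqrt n(1-2\varepsilon)-1$, and in fact only requires $\varepsilon<\frac1{11}$ at that step --- so your closing remark that this step dictates the constant $\frac{2}{27}$ is accurate for the paper's variant but not quite for yours. Both arguments use hypotheses (a) and (b) in the same place and to the same effect; yours is arithmetically cleaner, while the paper's is more self-contained in that it never appeals to integrality of an auxiliary polytope. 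All the computations in your writeup check out (the values of $z^\ast$, the feasibility of $y^0$ from $n>1/\varepsilon$, the cancellation giving $\frac a4+(\frac34-\frac{5\varepsilon}{8})\sqrt n$, and the final slack $\frac{\sqrt n}{8}(1-11\varepsilon)+1>0$).
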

\begin{proof}
In the following, we abbreviate $P(n+1,\varepsilon)$ by $P$. Let $x^{SS'}:= \frac{1}{2} x^S + \frac{1}{2}x^{S'}$. We show that $x^{SS'} \in P$ by showing that $x^{SS'} \leq \bar x$ (where the inequality is coordinate-wise) for a vector $\bar x \in P$, that is, $x^{SS'}$ is \emph{dominated} by a point $\bar x$ inside the Knapsack polytope. Since the Knapsack polytope is down-monotone, the claim follows.

We now describe how to choose $\bar x$. Let $\alpha:=|S\cap S'|\leq \sqrt{n}/2-4$ and $\beta:= \lfloor (1 - 2 \varepsilon) \sqrt n \rfloor$. By our choice of $n$ and $\epsilon$, we have $\beta-\alpha > 0$.
Consider the family ${\cal K}$ all possible subsets of $[n+1]$ of elements which:
\begin{itemize}
\item contain $\{n+1\} \cup (S\cap S')$;
 \item contain $\beta -\alpha$ elements from $S\triangle S'$.
  \end{itemize}
  Note that each of those sets has weight  (with respect to our definition of $P(n+1, \varepsilon)$)
  $$n + \frac{\beta}{2\varepsilon \sqrt n}  \leq n + \frac{(1 - 2 \varepsilon) \sqrt n}{2 \varepsilon \sqrt n}   \leq n+\frac{1}{2\varepsilon}-1$$ hence it is a feasible Knapsack solution. Therefore, the incidence vector of each of those Knapsack solutions belongs to $P$, and we associate to each of them the multiplier $\frac{4}{5} \cdot \frac{1}{|{\cal K}|}$. Moreover, consider the set $S \cup S'$. This set also induces a feasible solution of $P$, since the total weight of the elements is at most $1/\varepsilon<n$, and therefore its incident vector is a point in $P$. We associate to this incident vector the multiplier $1/5$. By construction, those multipliers induce a convex combination of points from $P$, hence a point $\bar x \in P$.

 \smallskip
 We are left with showing that $\bar x$ dominates $x^{SS'}$. Clearly, for any index $i \notin S \cup S' $ we have $ \bar{x}_i = 0 = x_i^{SS'} $. For the index $n+1$
we have $\bar x_{n+1}=x^{SS'}_{n+1}=4/5$. For indices $i \in S \cap S'$ we have $\bar x_i=1> (1 - \frac{\varepsilon}{2}) =x^{SS'}_i$. 
 Therefore, it only remains to analyze indices $i \in S \Delta S'$.

 \smallskip
 Observe that $|S\triangle S'|= 2 \sqrt n- 2\alpha$, and hence that

$$\sum_{i \in  S \triangle S'}\bar x_i = \frac{4}{5} (\beta -\alpha)+ \frac{1}{5} (2 \sqrt n-2\alpha) \geq \frac{4}{5} ((1 - 2 \varepsilon) \sqrt n -1-\alpha)+ \frac{1}{5} (2 \sqrt n-2\alpha).$$ Since $\bar x_i=\bar x_j$ for $i,j \in S \triangle S'$, for each $i \in S \triangle S'$, one has:

$$\bar x_i \geq \frac{4}{5} \cdot \frac{(1 - 2 \varepsilon) \sqrt n -1-\alpha}{2 \sqrt n-2\alpha} + \frac{1}{5}$$
and we want to show that the latter value exceeds $x^{SS'}_i$, that is, we would like the following inequality to hold

$$\frac{4}{5} \cdot \frac{(1 - 2 \varepsilon) \sqrt n -1-\alpha}{2 \sqrt n-2\alpha} + \frac{1}{5} \geq  \frac{1}{2}-\frac{\varepsilon}{4}$$

Now notice that
\begin{eqnarray*}\frac{4}{5} \cdot \frac{(1 - 2 \varepsilon) \sqrt n -1-\alpha}{2 \sqrt n-2\alpha} + \frac{1}{5}& \geq & \frac{1}{2}-\frac{\varepsilon}{4}\\
&\Leftrightarrow&\\
(1 - 2 \varepsilon) \sqrt n -1-\alpha & \geq & \Big(\frac{3}{8} -\frac{5}{16} \varepsilon\Big) \Big(2 \sqrt n-2\alpha\Big)\\
&\Leftrightarrow&\\
 \frac{\frac{1}{4} -\frac{11}{8} \varepsilon }{ \frac{1}{4} + \frac{5}{8} \varepsilon} \sqrt n & \geq & \alpha + \frac{1}{\frac{1}{4}+ \frac{5}{8}\epsilon}
\end{eqnarray*}

and the latter inequality is true since $0 < \varepsilon < \frac{2}{27}$ implies that
$$ \frac{\frac{1}{4} -\frac{11}{8} \varepsilon }{ \frac{1}{4} + \frac{5}{8} \varepsilon} \geq \frac{1}{2}  \; \mbox{  and } \; 4 >  \frac{1}{\frac{1}{4}+ \frac{5}{8}\epsilon},$$
therefore, recalling that $\alpha \leq   \sqrt{n}/2 - 4$, we get

$$\frac{\frac{1}{4} -\frac{11}{8} \varepsilon }{ \frac{1}{4} + \frac{5}{8} \varepsilon} \sqrt n \geq  \frac{1}{2} \sqrt n \geq  \alpha +4 \geq  \alpha + \frac{1}{\frac{1}{4}+ \frac{5}{8}\epsilon}$$
as claimed.

\end{proof}

\section{An upper bound for down-monotone polytopes}

In this section we prove Theorem \ref{thm:upper_bound}.

Let $P \subseteq \R^n_{+}$ be a down-monotone polytope. By scaling it, we can suppose that $P\subseteq [0,1]^n$. For any $\varepsilon$ with $1/2\geq\varepsilon >0$, set $\gamma :=\varepsilon/(1-\varepsilon)\leq1$. We will approximate $P$ with inequalities whose coefficients belong to the following set:

$$S:=\{0\}\cup \{\lfloor(1 +\frac{\gamma}{2})^{\ell}\rfloor : \ell =0,1,\dots,\lfloor \log_{(1+\frac{\gamma}{2})} \Big(\frac{5n}{\gamma}\Big)\rfloor \}.$$

Note that $|S| \leq 1+\min\{\lfloor \log_{(1+\frac{\gamma}{2})}\Big(\frac{5n}{\gamma}\Big) \rfloor, \frac{5n}{\gamma}\}$, and we have

$$\log_{(1+\frac{\gamma}{2})} \left(\frac{5n}{\gamma}\right)=O\left(\frac{\log (5n/\epsilon)}{\log(1+\frac{\epsilon}{2})}\right) = O\left(\frac{\log n/\epsilon}{\epsilon}\right),$$

where the last equality follows from the fact that $\frac{\log(1+ \frac{\epsilon}{2})}{\epsilon}=\Theta(1)$ for $\epsilon \rightarrow 0$, which can be verified using e.g. l'H\^{o}pital's rule. We conclude $|S|=O(\min\{\frac{\log (n/\epsilon)}{\epsilon},\frac{n}{\epsilon}\})$, and in
particular $|S|=O(\log n)$ for any constant $\varepsilon$.

\smallskip

Let now $Q$ be the polytope obtained by intersecting $1\geq x\geq 0$ with all the inequalities of the form $\tilde c x \leq \max\{\tilde cy : y \in P\} $ with $\tilde c_i \in S$ for $i \in [n]$. That is:
$$ Q := \{x \in \mathbb R^n: \; 0 \leq x \leq 1,  \; \tilde c x \leq \max\{\tilde cy : y \in P\} \mbox{ for all } \tilde c \in \mathbb R^n \mbox{ with } \tilde c_i \in S \; \forall i \in [n]  \}.$$

In order to prove Theorem \ref{thm:upper_bound}, we are left to show the following.
\begin{lemma}\label{lem:ub}
$Q$ is an $\epsilon$-approximated formulation for $P$.
\end{lemma}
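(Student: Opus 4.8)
The plan is to prove the two required inclusions separately. First I would show $P \subseteq Q$, which is the easy direction: every defining inequality of $Q$ has the form $\tilde c x \leq \max\{\tilde c y : y \in P\}$ with $\tilde c_i \in S \subseteq \R_+$, so it is valid for $P$ by construction; together with $P \subseteq [0,1]^n$ (which we arranged by scaling) this gives $P \subseteq Q$ immediately. The real content is the approximation guarantee: for every objective $c \in \R^n$ we need $\max\{cx : x \in P\} \geq (1-\epsilon)\max\{cx : x \in Q\}$.

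For the approximation direction, the key idea is that it suffices to handle objectives $c \in \R^n_+$. Indeed, since both $P$ and $Q$ are down-monotone and contained in $[0,1]^n$, replacing any negative coordinate $c_i$ by $0$ does not decrease either optimum value and can only help the inequality; so I would reduce to the case $c \geq 0$. Given such a $c$, let $\bar x$ be an optimal vertex of $Q$ maximizing $cx$. The plan is to construct, from $c$, a \emph{rounded} objective $\tilde c$ with all coordinates in $S$ that approximately preserves the value $c \bar x$, and then exploit the fact that the inequality $\tilde c x \leq \max\{\tilde c y : y \in P\}$ is one of the constraints defining $Q$. Concretely, I would first discard the coordinates where $c_i$ is negligibly small (those contribute little because each $\bar x_i \leq 1$), rescale the remaining coefficients so they land in the range $[1, 5n/\gamma]$, and then round each down to the nearest element $\lfloor(1+\tfrac{\gamma}{2})^\ell\rfloor$ of $S$. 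Rounding each coefficient down by at most a multiplicative factor of $(1+\tfrac{\gamma}{2})$ loses at most a $(1+\tfrac{\gamma}{2})^{-1}$ factor in the objective, and choosing the threshold for "negligible" coordinates to be a small fraction of the total mass controls the additive loss; the parameter $\gamma = \epsilon/(1-\epsilon)$ is calibrated precisely so that these two sources of error combine into the clean bound $(1-\epsilon)$.

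The chain of inequalities I am aiming for is
\begin{equation*}
\max\{cx : x \in P\} \;\geq\; \max\{\tilde c x : x \in P\} / M \;\geq\; \tilde c \bar x / M \;\geq\; (1-\epsilon)\, c \bar x \;=\; (1-\epsilon) \max\{cx : x \in Q\},
\end{equation*}
where $M$ is the scaling factor introduced to map $c$ into the right range and where the middle inequality uses that $\bar x \in Q$ satisfies the constraint $\tilde c \bar x \leq \max\{\tilde c y : y \in P\}$. The first and last steps rely on $c, \tilde c \geq 0$ and on relating $\max\{\tilde c x : x \in P\}$ back to $\max\{cx : x \in P\}$, which holds because $\tilde c / M \leq c$ coordinatewise up to the controlled rounding error.

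The main obstacle, and where the delicate bookkeeping lives, is choosing the truncation threshold and the scaling $M$ so that the discarded small-coefficient terms and the per-coordinate rounding errors together amount to at most an $\epsilon$ fraction of $c\bar x$, while simultaneously guaranteeing that every rounded coefficient that matters lands inside the geometric grid $\{\lfloor(1+\tfrac{\gamma}{2})^\ell\rfloor\}$ — in particular that the largest needed exponent does not exceed $\lfloor \log_{(1+\gamma/2)}(5n/\gamma)\rfloor$, so that $\tilde c$ is genuinely among the inequalities defining $Q$. I expect the argument to hinge on a lower bound of the form $c\bar x \geq \max_i c_i$ (valid because the single-coordinate point is feasible for $Q$ when $c \geq 0$), which lets one normalize by the largest coefficient and bound the number of relevant scales by $n$, yielding the integer range $0$ to $5n(1-\epsilon)/\epsilon$ asserted in Theorem~\ref{thm:upper_bound}.
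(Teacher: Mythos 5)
Your proposal follows essentially the same route as the paper: reduce to $c\geq 0$ by down-monotonicity, truncate coefficients below a threshold of order $\|c\|_\infty\gamma/(5n)$, round the rest down onto the geometric grid $S$, invoke the resulting valid inequality $\tilde c x\leq\max\{\tilde c y: y\in P\}$ of $Q$, and absorb the additive truncation/floor errors using $\max\{cx:x\in P\}\geq\|c\|_\infty$; all the key ingredients match. The only (minor) organizational difference is that the paper runs the final estimate contrapositively --- showing any $x^*\in[0,1]^n$ with $cx^*>(1+\gamma)\beta$ violates $\tilde c x\leq\tilde\beta$ --- which hands the bookkeeping an extra factor $(1+\gamma)$ of slack and makes the constants close cleanly for all $\gamma\leq 1$, whereas your direct chain ending in $\tilde c\bar x/M\geq(1-\epsilon)c\bar x$ is tighter and would need a more careful accounting of the per-scale floor losses to verify with the specific set $S$ as defined.
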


\begin{proof}
Clearly $P \subseteq Q$. Consider any vector $c \in \mathbb R^n$. We have to show that
$$\beta := \max \{cx: x \in P\} \geq (1-\varepsilon) \max \{cx: x \in Q\}.$$
As $P$ is down-monotone, we can assume $c\geq 0$.
Define $\tilde c$ as follows.
Let $K=||c||_\infty \frac{\gamma}{5n}$ and $G:= \{i\in [n]: c_i < K\}$. Set $\tilde c_i = 0$ for $i \in G$ and $\tilde c_i =\lfloor(1+ \frac{\gamma}{2})^{\lfloor \log_{(1+\frac{\gamma}{2})}(\frac{c_i}{K}) \rfloor}\rfloor$ otherwise. Note that we have $\tilde c_i \in S$ for all $i \in [n]$.

Let $\tilde \beta := \frac{\beta}{K}$. We now show
\begin{equation*}
\max\{c x:   x \in Q \} \leq \max\{c x:  \tilde c x \leq \tilde \beta, 0\leq x\leq 1 \}\leq (1+\gamma)\beta=\beta /(1-\epsilon),
\end{equation*} which immediately concludes the proof. The equality follows by definition of $\gamma$. The first inequality follows easily by observing that the constraint
$\tilde c x \leq \tilde \beta$ is a valid constraint for $Q$, as $\tilde c_i \in S $  for all $ i \in [n]$ and for $x\in P$ one has $$\sum_{i} \tilde c_i x_i = \sum_{i \notin G} \tilde c_i x_i \leq \sum_{i \notin G} (1+ \frac{\gamma}{2})^{ \log_{(1+\frac{\gamma}{2})}(\frac{c_i}{K}) } x_i \leq \frac{1}{K} \sum_{i} c_i x_i \leq \frac{\beta}{K}=\tilde \beta.$$

For the second inequality, let $x^* \in [0,1]^n$ be such that $c x^*> (1+\gamma)\beta$. We show that $\tilde c x^* > \tilde \beta$, concluding the proof. We have:

\begin{IEEEeqnarray}{lll}\tilde c x^* \quad & = & \quad \sum_{i \notin G} \tilde c_i x^*_i  \nonumber \\
& \geq & \quad \sum_{i \notin G} \frac{2}{2+\gamma} \frac{c_i}{K} x^*_i - n  \label{eq:serie-b} \\
& = &\quad \frac{2}{2+\gamma}\frac{1}{K} \Big ( \sum_{i \in [n]} c_i x^*_i - \sum_{i \in G} c_i x^*_i\Big) - n  \nonumber\\
& \geq & \quad \frac{2}{2+\gamma}\frac{1}{K}cx^* - \frac{2}{2+ \gamma}\frac{1}{K} Kn - n \label{eq:los'anna} \\
& > & \quad \frac{2}{2+\gamma}\frac{1}{K}\beta(1+\gamma) - \frac{2n}{2+ \gamma} -n  \nonumber\\
& = & \quad  \tilde \beta + \frac{\gamma}{2+\gamma} \tilde \beta - \frac{2n}{2+\gamma} -n  \nonumber\\
& \geq & \quad \tilde \beta,  \label{eq:the-end-is-the-beginning-is-the-end}\end{IEEEeqnarray}

\noindent where \eqref{eq:serie-b} is implied by $x^* \in [0,1]^n$ and, for $i \notin G$,  $$\tilde c_i \geq (1+ \frac{\gamma}{2})^{ \log_{(1+\frac{\gamma}{2})}(\frac{c_i}{K})  - 1}-1 = \frac{c_i}{K} (1+ \frac{\gamma}{2})^{-1} -1= \frac{2}{2+\gamma} \frac{c_i}{K}-1;$$
\eqref{eq:los'anna} follows from the fact that $c_i < K$ for $i \in G$, $|G|\leq n$, and $x^* \leq 1$;  and \eqref{eq:the-end-is-the-beginning-is-the-end} holds because $$\frac{\gamma}{2+\gamma} \tilde \beta - \frac{2n}{2+\gamma} - n\geq 0 \Leftrightarrow \tilde \beta \geq \frac{4n + \gamma n}{\gamma} \Leftrightarrow  \beta \geq Kn\frac{(4+\gamma)}{\gamma} = ||c||_{\infty}\frac{4+\gamma}{5},$$ and the latter is true because our choices of parameters imply $\beta \geq ||c||_\infty$ and $\gamma \leq 1$.
\end{proof}

\bigskip

\noindent {\bf Acknowledgments.} We thank Carsten Moldenhauer for useful discussions.

\end{document}